\newtheorem{theorem}{Theorem}[section]
\newtheorem{proposition}[theorem]{Proposition}
\newtheorem{lemma}[theorem]{Lemma}
\newtheorem {corollary}[theorem]{Corollary}
\theoremstyle {definition}
\newtheorem {definition}[theorem]{Definition}
\newtheorem {example}[theorem]{Example}
\theoremstyle {remark}
\newtheorem{remark}[theorem]{Remark}
\newtheorem{question}{Question}
\def\Rep{\operatorname{Rep}}
\def\Hom{\operatorname{Hom}}
\def\Ext{\operatorname{Ext}}
\def\Spec{\operatorname{Spec}}
\def\Im{\operatorname{Im}}
\def\Aut{\operatorname{Aut}}
\def\GL{\operatorname{GL}}
\def\Gal{\operatorname{Gal}}
\newcommand{\fX}{\ensuremath{\mathcal X}}
\newcommand{\fG}{\ensuremath{\mathcal G}}
\newcommand{\fV}{\ensuremath{\mathcal V}}
\newcommand{\fO}{\ensuremath{\mathcal O}}
\newcommand{\fR}{\ensuremath{\mathcal R}}
\newcommand{\bF}{\ensuremath{\mathbb F}}
\newcommand{\bQ}{\ensuremath{\mathbb Q}}
\newcommand{\bZ}{\ensuremath{\mathbb Z}}
\begin{document}

\title{Hodge cohomology of \'etale Nori finite vector bundles}
\author{\DJ o\`an Trung Cuong}

\address{\DH o\`an Trung Cuong,
FB Mathematik, Universit\"at Duisburg-Essen, 45117 Essen, Germany 
and Institute of Mathematics, 10307 Hanoi, Vietnam.}
\email{cuong.doan@uni-due.de}

\subjclass[2000]{14F05, 14J60, 20C99}
\keywords{Hodge cohomology, \'Etale Nori finite vector bundles,  Frobenius periodic vector bundles}
\thanks{This work has been supported by the SFB/TR 45 ``Periods,
moduli spaces and arithmetic of algebraic varieties''}

\begin{abstract}
\'Etale Nori finite vector bundles are those bundles
 defined by representations of a finite \'etale group scheme in the usual way. In this note we show that in many cases the dimensions of the Hodge cohomology groups of such a vector bundle and of a twist of it by an automorphism of the ground field are the same. This generalizes to the higher rank case the result of Pink-Roessler \cite[Proposition 3.5]{pr}. \vspace{.25cm}
\end{abstract}
\maketitle

\section{Introduction}
Let $X$ be a smooth geometrically connected projective variety over a perfect field $k$ with a $k$-rational point $x\in X(k)$. Let $V$ be a vector bundle on $X$. The Hodge cohomology groups with coefficients in $V$ are defined as
$$H^i_\mathrm{Hdg}(X, V):= \bigoplus_jH^{i-j}(X,V\otimes_{\fO_X}\Omega^j_{X/k})$$
and we set $h^i_\mathrm{Hdg}(V):=\mathrm{dim}_kH^i_\mathrm{Hdg}(X, V)$. In \cite{pr}, Pink and Roessler showed that if $\mathrm{char}(k)=0$ and $V=L$ is a torsion line bundle, that is, $L^{\otimes n}\simeq \fO_X$ for some natural number $n$ then $h^i_\mathrm{Hdg}(L)=h^i_\mathrm{Hdg}(L^{\otimes a})$ for $a$ prime to $n$. They also posed the following question for the positive characteristic case (see \cite[Conjecture 5.1]{pr}): Assume that $\mathrm{char}(k)=p>0$, $X$ is liftable over the ring $W_2(k)$ of $2$-Witt vectors, $\dim(X)\leqslant p$ and $L$ is a torsion line bundle of order $n$ on $X$. Then is it the case that $h^i_\mathrm{Hdg}(L)=h^i_\mathrm{Hdg}(L^{\otimes a})$ for $a$ prime to $n$, $i\in \mathbb Z$? In fact, when $(n, p)=1$, Pink and Roessler showed that the conjecture is true as an easy consequence of a result of Deligne-Illusie \cite[Lemme 2.9]{di}. It is also noticed in  \cite[Remark 10]{eo} that a Riemann-Roch calculation implies that the question has a positive answer for curves. The remaining case is when $p$ divides $n$ and $X$ has the dimension at least $2$. As far as we are aware of, the only result until now is a positive answer of Esnault-Ogus for $n=p$ and $X$ ordinary.

The aim of this note is to generalize the result of Pink-Roessler to the higher rank case. Recall that Nori \cite{no} defined a vector bundle $V$ to be finite if $V$ satisfies an equality $f(V)\simeq g(V)$ for some distinct polynomials $f, g$ whose coefficients are non-negative integers. We say a vector bundle $V$ is Nori finite if it is a subquotient of a finite direct sum of finite vector bundles. In that case, by taking direct sums, tensors, duals, subquotients, $V$ generates a neutral Tannaka category (see \cite[Chapter 1, Proposition 3.7]{no}). Due to a well-known theorem of Saavedra, this category together with the fiber functor $x$ is equivalent to the  representation category $\Rep_k(G)$ of the finite group scheme $G$ of tensor automorphisms of the fiber functor $x$, together with the fiber functor obtained as the forgetful functor $\mathrm{Rep}_k(G) \rightarrow \mathrm{Vect}_k$. $G$ is called the Tannaka group scheme of $V$. Torsion line bundles are exactly Nori finite vector bundles of $\mathrm{rank}\, 1$, and the Tannaka group scheme is $\mu_n$ where $n$ is the torsion order. The condition $n$ being prime to $p$ is equivalent to the underlying Tannaka group scheme being \'etale. In the present note we consider only Nori finite vector bundle with an \'etale Tannaka group scheme. The \'etale assumption is always satisfied in characteristic zero due to a result of Cartier (see \cite[11.4]{wa}) and the category generated by a Nori finite vector bundle is always semi-simple. The situation is more complicated in characteristic $p>0$. For higher rank Nori finite vector bundles, from Artin-Schreier theory it is known that the Tannaka group scheme $G$ can be a $p$-group scheme and still \'etale. However, in this case the category $\Rep_k(G)$ is no longer semi-simple, this causes a lot of difficulties in studying the corresponding bundle. Now assume $G$ is \'etale and $k$ is algebraically closed. The category $\mathrm{Rep}_k(G)$ is equivalent to the representation category $\mathrm{Rep}_k(G(k))$ of the abstract group $G(k)$ which carries a natural action of $\Aut(k)$. Let $\rho: G(k) \rightarrow \GL_r(k), r=\mathrm{rank}(V)$, be the representation corresponding to $V$ and $\sigma\in\Aut(k)$. We denote the vector bundle corresponding to $\sigma\circ\rho$ by $V_\sigma$. If $V$ is a line bundle then $V_\sigma$ is just a power of $V$. The following question is a generalization of Pink-Roessler's problem for higher rank vector bundles.
\begin{question}\label{star}
When $\mathrm{char}(k)=p>0$, we assume in addition that $X$ is liftable over the ring $W_2(k)$ of $2$-Witt vectors on $k$ and $\dim(X)\leqslant p$. For $\mathrm{char}(k)\geqslant 0$, is it true that $h^i_\mathrm{Hdg}(V)=h^i_\mathrm{Hdg}(V_\sigma)$?
\end{question}
In this note we give a positive answer to Question \ref{star} in certain cases. When $\mathrm{char}(k)=p>0$, one is able to give a bound for the Hodge numbers of every Frobenius pullback of a Nori finite vector bundle $V$. Consequently, any pullback of $V$ by a big enough power of the Frobenius always satisfies the equality in Question \ref{star} (Proposition \ref{prop1} and Corollary \ref{prop1-1}). As an application, we get an answer to Question \ref{star} if in addition the associated representation $\rho$ is realizable over a finite field, that is, if it is conjugated over $k$ to a representation on $\GL_r(\bF_q)$ for some power $q$ of $p$. This allows us to give an algebraic proof for a positive answer for Question \ref{star} in characteristic zero (Proposition \ref{prop5}). Note that this has been handled before by Sauter \cite[Satz 4.2.1]{sa}. In fact she gave two proofs generalizing the proofs for torsion line bundles of Pink-Roessler and Esnault, both rely on the comparison theorem between de Rham cohomology and Betti cohomology. In the proofs of the results above we use implicitly properties of Frobenius periodic vector bundles, that is, those bundles invariant under the action of some power of the Frobenius. Further properties of these bundles are explored in the second part of the note. The main result is a characterization of Frobenius periodicity in terms of the rationality of the associated representation of the monodromy group (Proposition \ref{prop3}). This characterization is then used to relate the existence of a class of Frobenius periodic vector bundles with the existence of $p$-quotients of the fundamental group of the variety (Proposition \ref{prop4}).\vspace{.25cm}

\noindent{\bf Acknowledgments.} I thank Kay R\"ulling for the simple proof of Lemma \ref{31} and other useful discussions. H\'el\`ene Esnault brought to the author the main question with careful explanation. I am deeply grateful to her and to Ph\`ung H\^o Hai for many discussions, for their encouragement and patience. In particular, I thank H\'el\`ene Esnault for allowing me to present her result as Proposition \ref{prop1} in this note.

%%%%%%%%%%%%%%%%%%%%%%%%%%%%%%%%%%%%%%%%%%%%%%%%
\section{Hodge cohomology of \'etale vector bundles}

Let $k$ be a perfect field and $X$ be a geometrically connected projective variety over $k$ with a $k$-rational point $x\in X(k)$. Nori in \cite[page 80, Definition]{no} defined a vector bundle $V$ to be finite if there are two distinct polynomials $f, g$ whose coefficients are non-negative integers such that $f(V)\simeq g(V)$ as vector bundles. We will say a vector bundle $V$ is Nori finite if it is a subquotient of a finite direct sum of finite vector bundles. We denote still by $x$ the fiber functor at the point $x$ from the category of coherent sheaves on $X$ to the category of $k$-vector spaces. If $V$ is a Nori finite vector bundle, Nori proved that by taking direct sums, tensors, duals, subquotients, $V$ generates a $k$-linear abelian rigid tensor category (see \cite[Chapter 1, Proposition 3.7]{no}). Let $(V, x)$ be the pair consisting of this category and the fiber functor restricted on it, then by a well-known theorem of Saavedra, $(V, x)$ is equivalent to the representation category $\mathrm{Rep}_k(G)$ of a finite group scheme $G$ together with the forgetful functor $\mathrm{Rep}_k(G) \rightarrow \mathrm{Vect}_k$. This is called the Tannaka duality and $G$ is called the Tannaka group scheme of $V$. For more on Nori finite vector bundles and Tannaka duality we refer to \cite{no, ehs}.

\begin{definition} A vector bundle $V$ on $X$ is called \'etale Nori finite if it is Nori finite and the Tannaka group scheme is \'etale. 
\end{definition}
\begin{remark}\label{23} (i) We say that a vector bundle $V$ is \'etale trivializable if there is an \'etale covering $\pi: Y \longrightarrow X$ such that $\pi^*V$ is a trivial bundle. Following Nori \cite[Chapter 1]{no} (see also \cite[Section 2]{ehs}), $V$ is \'etale Nori finite if and only if it is \'etale trivializable.

\item(ii) Assume that $V$ is an \'etale Nori finite vector bundle with the \'etale Tannaka group scheme $G$. Let $V_1$ be a subquotient of $V$. Then $V_1$ is a Nori finite vector bundle which is an object of the category $(V, x)$ and $(V_1, x)$ is a subcategory of $(V, x)$. Via Tannaka duality, $(V_1, x)$ defines also a finite group scheme $G_1$ which is in fact a quotient of $G$ (see, for example, \cite[Appendix, Proposition 5]{no}). In particular, $G_1$ is \'etale and $V_1$ is \'etale Nori finite.
\end{remark}

We first note that the \'etale assumption on $G$ in Question \ref{star} is necessary, without this restriction the conclusion in Question \ref{star} should be at the opposite side as the following example shows.
\begin{example}
Let $X$ be a super singular elliptic curve over a field $k$ of characteristic $p>0$. Let $F_X$ be the Frobenius on $X$, then the induced map $F_X^*: H^1(X, \fO_X) \longrightarrow H^1(X, \fO_X)$ vanishes \cite[page 332]{ha}. Let $0\not=\alpha\in \Ext_{\fO_X}^1(\fO_X, \fO_X)\simeq H^1(X, \fO_X)$ be a cohomology class of an extension $V$ of $\fO_X$ by itself. Since $F_X^*(\alpha)=0$, the sequence $0 \longrightarrow \fO_X \longrightarrow F_X^*V \longrightarrow \fO_X \longrightarrow 0$ splits. It is proved by Mehta-Subramanian \cite[Section 2]{ms} that $V$ is Nori finite with a finite local group scheme. It is easy to show that the non-splitting of the sequence $0 \longrightarrow \fO_X \longrightarrow V \longrightarrow \fO_X \longrightarrow 0$ implies (in fact, is equivalent to) $\dim_k H^0(X, V)=\dim_k H^0(X, \fO_X)=1$ and $\dim_kH^0(X, F_X^*V)=2\dim_kH^0(X, \fO_X)=2$. Therefore, $h_\mathrm{Hdg}^0(V)<h_\mathrm{Hdg}^0(F_X^*V)$.
\end{example}

In positive characteristic, the most emphasized automorphism in Question \ref{star} is $\sigma=F^{n}$ a power of the Frobenius of $k$. In that case, it is easy to see that $V_\sigma\simeq (F_X^n)^*V$ where $F_X$ is the Frobenius morphism of $X$. The first attempt to answer Question \ref{star} is the following proposition due to Esnault, we thank her for allowing us to present it in this note.
\begin{proposition}\label{prop1} Let $k$ be an algebraically closed field of characteristic $p>0$ and $X$ be a geometrically connected projective variety over $k$. Let $V$ be an \'etale Nori finite vector bundle on $X$. The set $\{h^i_\mathrm{Hdg}((F_X^n)^*V): i\in \mathbb Z, n\geqslant 0\}$ is finite.
\end{proposition}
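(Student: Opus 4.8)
The plan is to realize every Frobenius pullback $(F_X^n)^*V$ as a fibre of a single algebraic family of vector bundles parametrized by a scheme of finite type over $\mathbb{F}_p$, and then to invoke the constructibility of Hodge numbers in such a family.

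First I set $\Gamma := G(k)$, a finite abstract group since $G$ is \'etale and $k=\bar k$, and let $\rho:\Gamma\to\GL_r(k)$ be the representation attached to $V$. As recorded above, for $\sigma=F^n$ one has $V_\sigma\simeq (F_X^n)^*V$, and $V_\sigma$ corresponds to the twisted representation $\rho_n:=F^n\circ\rho$; concretely $\rho_n$ is obtained from $\rho$ by raising every matrix entry to the $p^n$-th power. By Remark \ref{23}(i), $V$ is \'etale trivializable, and after passing to a Galois closure I may fix a connected finite \'etale Galois covering $\pi:\mathcal Y\to X$ with group $\Gamma$ such that every object of $(V,x)$ --- in particular each $V_{\rho_n}$ --- arises from its representation by the associated-bundle construction $\rho'\mapsto V_{\rho'}:=(\pi_*\mathcal O_{\mathcal Y}\otimes_k k^r)^\Gamma$, with $\Gamma$ acting on $\pi_*\mathcal O_{\mathcal Y}$ by deck transformations and on $k^r$ through $\rho'$. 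The crucial point is that this construction is functorial in $\rho'$, hence globalizes over a parameter space of representations.

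Second I would spread everything out. Choose a finitely generated $\mathbb{F}_p$-algebra $A\subseteq k$ together with a smooth projective family $\mathcal X\to\Spec A$ and a finite \'etale $\Gamma$-covering $\widetilde{\mathcal Y}\to\mathcal X$ pulling back to $\mathcal Y\to X$ under a chosen map $\Spec k\to\Spec A$. Let $\fR$ denote the representation scheme of $\Gamma$ in $\GL_r$, a scheme of finite type over $\mathbb{F}_p$ carrying a universal representation, and put $S:=\Spec A\times_{\mathbb{F}_p}\fR$, again of finite type over $\mathbb{F}_p$. Applying the associated-bundle construction to the universal representation (descent along the $\Gamma$-torsor $\widetilde{\mathcal Y}\to\mathcal X$) yields a vector bundle $\fV$ on $\mathcal X_S:=\mathcal X\times_{\Spec A}S$, flat over $S$, whose fibre over $(a,\rho')$ is $V_{\rho'}$ on the corresponding fibre of $\mathcal X$. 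Forming the relative Hodge sheaves $R^{i-j}\pi_{S,*}(\fV\otimes\Omega^j_{\mathcal X_S/S})$ and using generic flatness together with semicontinuity stratum by stratum, the function $s\mapsto h^i_\mathrm{Hdg}(\fV_s)$ is constructible on $S$; since $S$ is of finite type over $\mathbb{F}_p$, a constructible function takes only finitely many values.

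Finally, each $\rho_n$ is a $k$-point of $\fR$ which, together with the fixed $k$-point of $\Spec A$ classifying $X$, gives a point $s_n\in S(k)$ with $\fV_{s_n}\simeq V_{\rho_n}\simeq (F_X^n)^*V$; indeed $s_n$ is precisely the $n$-th iterate of the absolute Frobenius applied to $s_0$. Hence $h^i_\mathrm{Hdg}((F_X^n)^*V)=h^i_\mathrm{Hdg}(\fV_{s_n})$ lies in the finite value set of the constructible function above, for all $i$ and all $n\geqslant 0$, which is the assertion. I expect the main obstacle to be the second step: carrying out the associated-bundle construction in families to obtain an honest bundle $\fV$ flat over $S$, and arranging the spread-out family $\mathcal X_S\to S$ to be smooth and proper so that the relative differentials $\Omega^j_{\mathcal X_S/S}$ and the constructibility of the $h^i_\mathrm{Hdg}$ behave as required. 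Note that no lifting hypothesis nor bound on $\dim X$ enters, consistent with the unconditional statement.
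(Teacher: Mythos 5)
Your proof is correct, but it takes a genuinely different route from the paper's. The paper argues elementarily: the unit of adjunction embeds $V$ into $\pi_*\fO_Y^{\oplus r}$, giving an exact sequence $0\to V\to \pi_*\fO_Y^{\oplus r}\to V_1\to 0$ with $V_1$ again \'etale Nori finite; since $F_X^*(\pi_*\fO_Y)\simeq\pi_*\fO_Y$, pulling back by $F_X^n$, tensoring with each $\Omega^j_{X/k}$ and taking long exact cohomology sequences gives $h^i_\mathrm{Hdg}((F_X^n)^*V)\leqslant r\,h^i_\mathrm{Hdg}(\pi_*\fO_Y)+h^{i-1}_\mathrm{Hdg}((F_X^n)^*V_1)$ uniformly in $n$, and induction on $i$ (the inductive hypothesis being applied to $V_1$) concludes. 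You instead spread everything out over a finitely generated $\mathbb F_p$-algebra, parametrize representations by the representation scheme $\fR$ of $\Gamma$ in $\GL_r$, descend a universal bundle along the spread-out $\Gamma$-torsor, and invoke constructibility of fibrewise cohomology dimensions over a Noetherian base. All of this works: descent along a finite \'etale torsor is effective and commutes with arbitrary base change --- so the ``main obstacle'' you flag is in fact unproblematic even when $p$ divides the order of $\Gamma$, since no averaging is involved --- and a constructible integer-valued function on a Noetherian scheme takes finitely many values. Your machinery is much heavier, but it proves more: the Hodge numbers of \emph{all} rank-$r$ bundles with monodromy group $\Gamma$, not only the Frobenius pullbacks of $V$, form a finite set, and all twists $V_\sigma$, $\sigma\in\Aut(k)$, are covered on the same footing; the paper's induction, by contrast, is short, avoids moduli, and sets up exactly the uniform bound that feeds into Corollary \ref{prop1-1}. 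Two blemishes you should repair: first, the proposition does not assume $X$ smooth, and when it is not, no smooth family $\fX\to\Spec A$ exists; but smoothness is never needed, since properness of $\fX_S\to S$ together with the fact that K\"ahler differentials and their exterior powers commute with base change suffices. Second, your aside that $s_n$ is the $n$-th absolute Frobenius iterate of $s_0$ is inaccurate --- the absolute Frobenius of $S$ also moves the $\Spec A$-coordinate $a_0$; it is the partial Frobenius on the $\fR$-factor that fixes $a_0$ and sends $\rho_0$ to $\rho_n$ --- but it is also unnecessary, because $\fV_{s_n}\simeq (F_X^n)^*V$ already follows from base change for the descent construction combined with the identification $V_{F^n\circ\rho}\simeq (F_X^n)^*V$ recorded in the paper.
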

\begin{proof}
It suffices to show that for each $0\leqslant i\leqslant \dim(X)$, there is a constant bounding above $h^i_\mathrm{Hdg}((F_X^n)^*V)$ for all $n\geqslant 0$.  Fix a $k$-rational point of $X$. We know by Remark \ref{23}(i) that there is an \'etale covering $\pi: Y\longrightarrow X$ which trivializes $V$, that is, $\pi^*V\simeq \fO_Y^{\oplus r}, r=\mathrm{rank}(V)$. One gets a short exact sequence
$$0 \longrightarrow V \longrightarrow \pi_*\fO_Y^{\oplus r} \longrightarrow V_1 \longrightarrow 0.$$
Since $\pi_*\fO_Y$ is \'etale Nori finite, $V_1$ is also \'etale Nori finite by Remark \ref{23}(ii). Note that $(F_X)^* (\pi_*\fO_Y)\simeq \pi_*\fO_Y$. Hence by twisting the sequence with $\Omega^1_{X/k}$, one obtains
$$h^i_\mathrm{Hdg}((F_X^n)^*V)\leqslant rh^i_\mathrm{Hdg}(\pi_*\fO_Y)+h^{i-1}_\mathrm{Hdg}((F_X^n)^*V_1), \ \text{ for all } n\geqslant 0.$$
Using induction on $i$, the right hand side is bounded above by a constant not depending on $n$, therefore $h^i_\mathrm{Hdg}((F_X^n)^*V)$ is bounded above by a constant.
\end{proof}

By the proof of \cite[Lemme 2.9]{di} (see \cite[Lemma 11.1]{ev} for a precise statement), for each $i\leqslant p$ the sequence $\{h^i_\mathrm{Hdg}((F_X^n)^*V)\}_{n=0}^\infty$ does not decrease. So we get immediately the following consequence of Proposition \ref{prop1}.
\begin{corollary}\label{prop1-1}
 Let $X$ be a geometrically connected projective variety over an algebraically closed field $k$ of characteristic $p>0$. Let $V$ be an \'etale Nori finite vector bundle on $X$. For each $i\leqslant p$, $h^i_\mathrm{Hdg}((F_X^n)^*V)$ is a constant for $n\gg 0$.
\end{corollary}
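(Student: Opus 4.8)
The plan is to deduce the corollary formally from the two facts now in hand: the finiteness of the collection of attained Hodge numbers (Proposition \ref{prop1}) and the monotonicity, recalled just above, that for each $i\leqslant p$ the sequence $\{h^i_\mathrm{Hdg}((F_X^n)^*V)\}_{n=0}^\infty$ is non-decreasing (by the argument of \cite[Lemme 2.9]{di}, in the precise form of \cite[Lemma 11.1]{ev}). Granting these, the statement reduces to an elementary observation about integer sequences, so I would not need to reopen any geometry.

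First I would fix an index $i$ with $0\leqslant i\leqslant p$ and write $a_n:=h^i_\mathrm{Hdg}((F_X^n)^*V)$. By Proposition \ref{prop1} the family $\{h^i_\mathrm{Hdg}((F_X^n)^*V): i\in\mathbb Z,\ n\geqslant 0\}$ is finite; in particular the single sequence $(a_n)_{n\geqslant 0}$ assumes only finitely many values and, being a sequence of non-negative integers, is bounded above. By the recalled monotonicity it is also non-decreasing. A non-decreasing, bounded-above sequence of integers attains its supremum $M:=\sup_n a_n$ at some index $n_0$, and then $a_n=M$ for all $n\geqslant n_0$; this is exactly the asserted constancy for $n\gg 0$. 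Since $h^i_\mathrm{Hdg}$ vanishes for $i$ outside a bounded range and only the finitely many indices $0\leqslant i\leqslant p$ are in question, one may even take a single $n_0$ valid for all of them at once.

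I expect no genuine obstacle inside this argument, since the whole of its content has been pushed into the two inputs. The finiteness is precisely Proposition \ref{prop1}, and the only delicate ingredient is the monotonicity, which is exactly where the hypothesis $i\leqslant p$ enters: it is this bound that permits the Deligne--Illusie splitting of the de Rham complex to govern the comparison of the Hodge numbers of a bundle with those of its Frobenius pullback, forcing them not to drop. In short, the corollary is the ``eventually constant'' half of a monotone bounded sequence, and the real work lives upstream in Proposition \ref{prop1} and in the cited estimate.
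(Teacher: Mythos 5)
Your proof is correct and takes essentially the same route as the paper: the paper likewise combines the non-decreasing property of $\{h^i_\mathrm{Hdg}((F_X^n)^*V)\}_{n\geqslant 0}$ for $i\leqslant p$ (from \cite[Lemme 2.9]{di}, in the form of \cite[Lemma 11.1]{ev}) with the finiteness given by Proposition \ref{prop1}, and declares the corollary immediate. You have simply made explicit the elementary step that a bounded, non-decreasing sequence of integers is eventually constant, which is exactly what ``immediately'' covers.
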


\begin{definition}
 Let $H$ be a group and $\rho: H \longrightarrow \GL_r(k)$ be a representation of $H$. Let $k_0\subseteq k$ be a subfield. We say that $\rho$ is realizable over $k_0$ if it is conjugated over $k$ to a representation $\rho_0: H \longrightarrow \GL_r(k_0)$, that is, $\rho\simeq \rho_0\otimes_{k_0}k$.
\end{definition}

Let $V$ be an \'etale Nori finite vector bundle with a finite \'etale group scheme $G$. Over the algebraically closed field $k$, the representation category of $G$ is equivalent to the representation category of the abstract group $G(k)$. Let $\rho: G(k) \longrightarrow \GL_r(k)$ be the associated representation of $V$. For each $\sigma \in \Aut(k)$, denote by $V_\sigma$ the vector bundle associated to the representation $\sigma\circ\rho$. In the next proposition, we show that if $\rho$ is realizable over a finite field then $V$ satisfies the equality in Question \ref{star}. 

\begin{proposition}\label{prop2}
 Assume in addition that $X$ is smooth and liftable to the ring $W_2(k)$ of $2$-Witt vectors over $k$. Let $\sigma\in \Aut(k)$. If $\rho$ is realizable over $\overline{\mathbb F}_p$ then 
$$h^i_\mathrm{Hdg}(V)=h^i_\mathrm{Hdg}(V_\sigma), \ \text{ for all } i \leq p.$$
In particular, the equality holds for all $i\in \mathbb Z$ if $\dim X\leq p$.
\end{proposition}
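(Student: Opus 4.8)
The plan is to use the realizability hypothesis to convert the $\sigma$-twist into a power of the geometric Frobenius pullback, and then to combine Frobenius periodicity with the monotonicity of Hodge numbers along the Frobenius tower. First I would shrink the field of coefficients: since $G(k)$ is finite, $\rho(G(k))$ is finite, so realizability over $\overline{\mathbb F}_p$ already gives realizability over a finite subfield $\mathbb F_q\subseteq\overline{\mathbb F}_p$ with $q=p^m$. Conjugating $\rho$ does not change the isomorphism class of $V$, so I may assume $\rho(G(k))\subseteq\GL_r(\mathbb F_q)$. Any $\sigma\in\Aut(k)$ preserves the subfield of elements algebraic over the prime field, hence restricts to an element of $\Gal(\mathbb F_q/\mathbb F_p)$, a cyclic group generated by the Frobenius $F\colon x\mapsto x^p$. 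Thus $\sigma|_{\mathbb F_q}=F^s|_{\mathbb F_q}$ for some $s\geqslant 0$, and since the entries of $\rho$ lie in $\mathbb F_q$ we get $\sigma\circ\rho=F^s\circ\rho$ as representations. By the observation just before Proposition \ref{prop1} this yields $V_\sigma\simeq V_{F^s}\simeq (F_X^s)^*V$. Moreover $F^m$ acts trivially on $\mathbb F_q$, so $V\simeq V_{F^m}\simeq (F_X^m)^*V$; that is, $V$ is Frobenius periodic of period $m$.

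Next I would fix $i\leqslant p$ and exploit this periodicity. From $(F_X^m)^*V\simeq V$ we obtain $(F_X^{n+m})^*V\simeq (F_X^n)^*\big((F_X^m)^*V\big)\simeq (F_X^n)^*V$ for every $n$, so the sequence $\{h^i_\mathrm{Hdg}((F_X^n)^*V)\}_{n\geqslant 0}$ is periodic. By the non-decreasing property of this sequence for $i\leqslant p$ recalled just before Corollary \ref{prop1-1}, it is also monotone, and a monotone periodic sequence is constant. Hence $h^i_\mathrm{Hdg}((F_X^n)^*V)=h^i_\mathrm{Hdg}(V)$ for all $n$, and taking $n=s$ gives $h^i_\mathrm{Hdg}(V_\sigma)=h^i_\mathrm{Hdg}((F_X^s)^*V)=h^i_\mathrm{Hdg}(V)$. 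This settles the equality for all $i\leqslant p$.

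For the final assertion I would pass to the dual and use Serre duality to cover the range above $p$. Set $d=\dim X$ and assume $d\leqslant p$. Serre duality identifies $H^{i-j}(X,V\otimes\Omega^j_{X/k})$ with the dual of $H^{d-i+j}(X,V^\vee\otimes\Omega^{d-j}_{X/k})$, using the perfect wedge pairing $(\Omega^j)^\vee\otimes\Omega^d\simeq\Omega^{d-j}$; summing over $j$ gives $h^i_\mathrm{Hdg}(V)=h^{2d-i}_\mathrm{Hdg}(V^\vee)$, and the same for $V_\sigma$. Here $V^\vee$ is again \'etale Nori finite, with associated representation the contragredient $\rho^\vee$, which is still realizable over $\overline{\mathbb F}_p$, and one checks $(V_\sigma)^\vee\simeq (V^\vee)_\sigma$ since transpose-inverse commutes with the entrywise action of $\sigma$. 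As $h^i_\mathrm{Hdg}$ vanishes outside $0\leqslant i\leqslant 2d$, only the range $p<i\leqslant 2d$ remains; for such $i$ one has $i>p\geqslant d$, hence $2d-i<d\leqslant p$, so the already established case applied to $V^\vee$ gives $h^{2d-i}_\mathrm{Hdg}(V^\vee)=h^{2d-i}_\mathrm{Hdg}((V^\vee)_\sigma)$. Chaining the Serre duality identities then yields $h^i_\mathrm{Hdg}(V_\sigma)=h^{2d-i}_\mathrm{Hdg}((V_\sigma)^\vee)=h^{2d-i}_\mathrm{Hdg}((V^\vee)_\sigma)=h^{2d-i}_\mathrm{Hdg}(V^\vee)=h^i_\mathrm{Hdg}(V)$, as required.

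I expect the main obstacle to be the first reduction rather than the cohomological estimates: one must make precise, through the Tannakian dictionary, that $V_\sigma$ is genuinely the Frobenius pullback $(F_X^s)^*V$ and that $V$ is Frobenius periodic, which rests on the finiteness of $\Gal(\mathbb F_q/\mathbb F_p)$ and on conjugation preserving isomorphism classes of bundles. Once periodicity is established the conclusion is essentially forced: the Deligne--Illusie monotonicity, valid precisely in the range $i\leqslant p$, upgrades ``periodic'' to ``constant'' with no further input, and it is exactly this range restriction that reappears, via Serre duality, as the dimension hypothesis $\dim X\leqslant p$.
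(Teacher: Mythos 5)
Your proof is correct and follows essentially the same route as the paper: realizability over $\overline{\mathbb F}_p$ descends to a finite field $\mathbb F_q$, so the $\sigma$-twist becomes a Frobenius pullback and $V$ is Frobenius periodic, whence the Deligne--Illusie monotonicity for $i\leq p$ forces the periodic, non-decreasing sequence of Hodge numbers to be constant. Two of your refinements actually improve on the paper's write-up: you restrict $\sigma$ to the finite field $\mathbb F_q$ rather than asserting, as the paper does, that $\sigma|_{\overline{\mathbb F}_p}$ is an integer power of the Frobenius (which is inaccurate, since $\Gal(\overline{\mathbb F}_p/\mathbb F_p)\simeq\widehat{\mathbb Z}$ and not every element is such a power), and you supply the Serre-duality argument for the final assertion about all $i\in\mathbb Z$ when $\dim X\leq p$, which the paper states as an unproved ``in particular.''
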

\begin{proof}
 Let $F\in \Aut(k)$ be the Frobenius homomorphism. Assume that $\rho$ is conjugated to a representation $\rho_0: G(k) \longrightarrow \GL_r(\overline{\mathbb F}_p)$. Then $\sigma\circ\rho$ is conjugated to $\sigma\circ\rho_0$. Moreover, from the Galois theory $\sigma\mid_{\overline{\mathbb F}_p}=F^n$ is a power of the Frobenius for some $n>0$. Hence $\sigma\circ\rho_0=F^n\circ\rho_0$ and we can replace $\sigma$ by $F^n$. It then suffices to prove
$$h^i_\mathrm{Hdg}(V)=H^i_\mathrm{Hdg}((F_X^n)^*V), \ \text{ for all } i\leq p.$$
In fact, $\rho$ is realizable over a finite field $\mathbb F_{p^N}$ for some $N>0$. Thus $F^N\circ\rho=\rho$ and $(F_X^N)^*V\simeq V$. Now the conclusion follows from Corollary \ref{prop1-1}.
\end{proof}
\begin{corollary}\label{28}
Keep the assumptions in Proposition \ref{prop2}. Assume in addition that $G(k)$ has order prime to $p$. Then $h^i_\mathrm{Hdg}(V)=h^i_\mathrm{Hdg}(V_\sigma), \ \text{ for all } i \leq p$.
\end{corollary}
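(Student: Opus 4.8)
The plan is to reduce the statement to Proposition~\ref{prop2} by verifying its only extra hypothesis, namely that the representation $\rho\colon G(k)\longrightarrow\GL_r(k)$ attached to $V$ is realizable over $\overline{\bF}_p$. This is a purely representation-theoretic consequence of the assumption $p\nmid \#G(k)$, and no further geometry is needed.

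First I would invoke Maschke's theorem: since $p$ does not divide the order of the finite group $H:=G(k)$, the group algebra $k[H]$ is semisimple, so $\rho$ decomposes as a direct sum of irreducible representations. As realizability over $\overline{\bF}_p$ is stable under direct sums, it suffices to treat the case of an irreducible $\rho$.

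Next I would apply the classical splitting-field criterion. Let $m$ be the exponent of $H$; then $m$ divides $\#H$ and hence is prime to $p$, so the algebraic closure $\overline{\bF}_p\subseteq k$ of the prime field already contains a primitive $m$-th root of unity. By Brauer's theorem on splitting fields (in characteristic coprime to the group order, where the ordinary and modular theories coincide; see, e.g., Curtis--Reiner or Serre), any field of characteristic prime to $\#H$ containing the $m$-th roots of unity is a splitting field for $H$. Consequently $\overline{\bF}_p[H]$ is a product of matrix algebras over $\overline{\bF}_p$, and for the extension $k/\overline{\bF}_p$ every irreducible $k$-representation of $H$ arises as $W\otimes_{\overline{\bF}_p}k$ for a (unique) irreducible $\overline{\bF}_p$-representation $W$. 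In other words $\rho$ is conjugate over $k$ to a representation landing in $\GL_r(\overline{\bF}_p)$, i.e.\ it is realizable over $\overline{\bF}_p$. In fact $W$ is already defined over the finite field $\bF_p(\zeta_m)$, so $\rho$ is even realizable over a finite field, matching the input actually used inside Proposition~\ref{prop2}.

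With realizability over $\overline{\bF}_p$ established, Proposition~\ref{prop2} applies verbatim and yields $h^i_\mathrm{Hdg}(V)=h^i_\mathrm{Hdg}(V_\sigma)$ for all $i\le p$. I expect the only point requiring care to be the correct citation and application of the splitting-field statement in positive characteristic: one must observe that because $p\nmid\#H$ the group algebra is semisimple and $\overline{\bF}_p$ contains enough roots of unity, so the situation is no different from the familiar characteristic-zero one. There is no analytic or cohomological obstacle; the entire content of the corollary is this representation-theoretic descent feeding into the already-proved Proposition~\ref{prop2}.
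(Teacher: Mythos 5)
Your proof is correct and follows essentially the same route as the paper: the paper's own proof simply asserts that $p\nmid\#G(k)$ forces all representations of $G(k)$ to be realizable over $\overline{\bF}_p$ and then invokes Proposition~\ref{prop2}. Your Maschke/Brauer splitting-field argument is exactly the justification of that asserted realizability, which the paper leaves implicit.
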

\begin{proof}
Since the order of $G(k)$ is prime to $p$, all representations of $G(k)$ are realizable over $\overline\bF_p$ and the assertion is from Proposition \ref{prop2}. 
\end{proof}

In the case of characteristic zero, Sauter in \cite[Satz 4.2.1]{sa} gave an affirmative answer to Question \ref{star} by using the comparison theorem between de Rham cohomology and Betti cohomology. Using Proposition \ref{prop2}, in the rest of this section we give an algebraic proof for this fact. We first recall Brauer's theory of decomposition map in representation theory of finite groups. Let $A$ be a Dedekind domain with the field of fractions $K$ of characteristic zero and a residue field $k$ of $\mathrm{char}(k)=p>0$. Assume that $G$ is a finite group scheme over $\Spec A$ whose Hopf algebra $A[G]$ is a finitely generated projective $A$-module. Let $G_K$ and $G_k$ denote respectively the generic fiber and the special fiber of $G$ at $k$. Assume in addition that $G_k$ is \'etale. Let $\Rep_K(G_K)$ (resp. $\Rep_k(G_k)$) be the category of finite dimensional representations of $G_K$ (resp. $G_k$) over $K$ (resp. $k$) and $\fR(G_K)$ (resp. $\fR(G_k)$) the corresponding Grothendieck group. By taking finite extensions of the fields if necessary, we can assume that $K$ and $k$ are big enough and $G_K$ and $G_k$ are defined by the abstract groups $G_K(K)$ and $G_k(k)$, that means, $\Gal(K)$ and $\Gal(k)$ act trivially on $G_K(K)$ and $G_k(k)$ respectively (see \cite[6.4]{wa}). If we denote by $\fR(G_K(K))$ and $\fR(G_k(k))$ the Grothendieck groups of the representation categories of $G_K(K)$ and $G_k(k)$ over $K$ and $k$ respectively, we have $\fR(G_K)\simeq \fR(G_K(K))$ and $\fR(G_k)\simeq \fR(G_k(k))$. In the theory of representations of finite groups, it is well-known that there is a ring epimorphism $d: \fR(G_K(K)) \longrightarrow \fR(G_k(k))$  which is called the decomposition map and is constructed as follows (see \cite[Theorem 33]{se}). Let $[V]\in \fR(G_K(K))$. Let $V_0$ be an $A$-lattice of $V$, that is, $V_0$ is a finitely generated $A$-submodule of $V$ and $V_0$ generates $V$ as a $K$-vector space. Replacing $V_0$ by the sum of its image under the actions of the elements of $G_K(K)$, we assume that $V_0$ is stable under the action of $G_K(K)$. Hence $V_0\otimes_Ak$ is a $G_k(k)$-module. From $V$ one can get different $V_0\otimes_Ak$ (even non-isomorphic) by choosing different $V_0$, but we get the same equivalent class $[V_0\otimes_Ak]\in \fR(G_k(k))$. This defines the map $d: \fR(G_K(K)) \longrightarrow \fR(G_k(k))$. We record some properties of this map in \cite{se}.

\begin{lemma}\cite[Theorem 33 and Proposition 43]{se}\label{41}
The map $d: \fR(G_K) \longrightarrow \fR(G_k)$ is a ring epimorphism. Moreover, if the order of $G_K(K)$ is prime to $p$, $d$ is an isomorphism.
\end{lemma}

On $\Rep_K(G_K(K))$ and $\Rep_k(G_k(k))$ there are natural actions of $\Aut(K)$ and $\Aut(k)$ which induce actions on the Grothendieck groups $\fR(G_K)\simeq \fR(G_K(K))$ and $\fR(G_k)\simeq \fR(G_k(k))$. The next lemma relates the action of $\Aut(K)$ on $\fR(G_k)$ via the decomposition map with the action of the Frobenius homomorphism on $\fR(G_k)$.

\begin{lemma}\label{42}
Let $n=\dim_KK[G_K]$ and $\xi\in A$ be a primitive $n$-th root of unity. Let $\sigma\in\Aut(K)$ and write $\sigma(\xi)=\xi^a$. Assume that $p\gg 0$ and $p\equiv a \mod n$. Then the action of $\sigma$ on $\fR(G_K)$ via the decomposition map coincides with the action of the Frobenius homomorphism $F$ of $k$, that is, $d\circ \sigma=F\circ d$.
\end{lemma}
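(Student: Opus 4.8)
The plan is to reduce everything to an identity of Brauer characters. Since the hypothesis $\sigma(\xi)=\xi^a$ forces $\gcd(a,n)=1$ (an automorphism carries a primitive $n$-th root of unity to a primitive one), the congruence $p\equiv a \bmod n$ forces $\gcd(p,n)=1$; in particular every element of $G_k(k)$ is $p$-regular, and by Lemma \ref{41} the decomposition map $d$ is an isomorphism. Because the Brauer characters of the simple $kG_k(k)$-modules are linearly independent, the Brauer character map embeds $\fR(G_k)$ into the space of (characteristic-zero-valued) class functions on the $p$-regular classes. Hence it suffices to show that $d(\sigma[W])$ and $F(d[W])$ have the same Brauer character for every $[W]\in\fR(G_K)$, and this can be checked on a fixed element $h\in G_k(k)$.

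Next I would invoke the two standard compatibilities that make the computation local at $h$. On the one hand, the decomposition map is restriction of the ordinary character to $p$-regular classes: if $\chi$ is the character of $W$, then the Brauer character of $d[W]$ at $h$ equals $\chi(h)=\sum_i \mu_i$, where $\mu_1,\dots,\mu_r$ are the eigenvalues of $\rho_W(h)$, each a root of unity whose order divides that of $h$ and hence divides $n$. On the other hand, since $\gcd(p,n)=1$ the reduction map identifies the $n$-th roots of unity over $A$ with those over $k$, and this identification is a group isomorphism, so it commutes with the $p$-power map. Consequently the Frobenius twist on $\fR(G_k)$, which raises eigenvalues to their $p$-th powers in characteristic $p$, lifts back to $\sum_i \mu_i^p$; that is, the Brauer character of $F(d[W])$ at $h$ is $\sum_i \mu_i^p$. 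Meanwhile, twisting by $\sigma$ applies $\sigma$ to matrix entries, so $\chi_{\sigma[W]}=\sigma\circ\chi$, and the Brauer character of $d(\sigma[W])$ at $h$ is $\sigma(\chi(h))=\sum_i \sigma(\mu_i)$.

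The conclusion is then a one-line arithmetic of roots of unity. Writing $\mu_i=\xi^{c_i}$, we get $\sigma(\mu_i)=\xi^{ac_i}=\mu_i^{a}$, and since $\mu_i^{n}=1$ together with $a\equiv p \bmod n$ we obtain $\mu_i^{a}=\mu_i^{p}$. Therefore $\sum_i\sigma(\mu_i)=\sum_i\mu_i^{p}$, so the two Brauer characters agree on every $p$-regular class, which gives $d\circ\sigma=F\circ d$.

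I expect the genuinely delicate point to be the bookkeeping of the middle paragraph, namely pinning down precisely that the decomposition map equals ``restrict the ordinary character to $p$-regular elements'' and that the characteristic-$p$ Frobenius twist corresponds, after the Brauer lift, to the honest $p$-th-power operation $\mu_i\mapsto\mu_i^p$ on the eigenvalue multiset; once these are secured, the remaining congruence $\mu_i^a=\mu_i^p$ is elementary. I would also want to verify that the assumption $p\gg 0$ is used only to guarantee (via Dirichlet) the existence of a prime with $p\equiv a \bmod n$ and with good reduction, and that it plays no role in the character identity itself.
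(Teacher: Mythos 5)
Your proof is correct, but it takes a genuinely different route from the paper's. The paper invokes Brauer's rationality theorem \cite[Theorem 24]{se} to realize $\rho$ over the cyclotomic field $\bQ(\xi)$, chooses an explicit $\bZ[\xi]$-lattice stable under the common group $G_0=G_K(K)\simeq G_k(k)$, and checks the identity at the matrix level: $\sigma$ acts on $\bZ[\xi]$ by $\xi\mapsto\xi^a$, while the residue-field Frobenius sends $\bar\xi\mapsto\bar\xi^p=\bar\xi^a$, so the reduction of the $\sigma$-twisted representation literally equals the Frobenius twist of the reduced one, $\overline{\sigma\circ\rho_0}=F\circ\bar\rho_0$; passing to classes gives $d\circ\sigma=F\circ d$. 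You instead stay entirely on the character-theoretic side: from $\sigma(\xi)=\xi^a$ and $p\equiv a \bmod n$ you extract $\gcd(p,n)=1$, so every element of $G_0$ is $p$-regular, the Brauer character map embeds $\fR(G_k)$ into class functions, $d$ becomes restriction of ordinary characters to $p$-regular classes, $F$ becomes the $p$-th power operation on lifted eigenvalues, and the congruence $\mu^a=\mu^p$ for $n$-th roots of unity closes the argument. What each buys: your route replaces Brauer's rationality theorem by the standard compatibilities of Brauer character theory (also found in \cite{se}) and makes the role of each hypothesis transparent; the paper's route produces an isomorphism of actual representations rather than an equality of classes in the Grothendieck group --- a distinction that evaporates here anyway, since $p\nmid n$ makes $\Rep_k(G_k)$ semisimple. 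Both arguments ultimately rest on the same arithmetic fact, namely that raising to the $a$-th power and to the $p$-th power agree on $n$-th roots of unity after reduction. Your closing remark about $p\gg 0$ is also essentially right: it is needed to ensure that $G_k$ is \'etale and that $G_K(K)\simeq G_k(k)$ (without which the two Grothendieck groups cannot even be compared through a common group), but it plays no role in the character identity itself.
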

\begin{proof}
The assumption implies $G_K(K)\simeq G_k(k)$, we denote this group by $G_0$. Let $\rho: G_0 \longrightarrow \GL_r(K)$ be a group homomorphism. Following \cite[Theorem 24]{se}, $\rho$ is realizable over $\mathbb Q(\xi)$, that is, there is a representation $\rho_0: G_0 \longrightarrow \GL_r(\bQ(\xi))$ such that $\rho=\rho_0\otimes_{\bQ(\xi)}K$. Fix a basis $e_1, \ldots, e_r$ of the vector space $\bQ(\xi)^r$ and denote
$$V_0=\sum_{g\in G_0}\sum_{i=1}^rg(e_i)\bZ[\xi].$$
$V_0$ is stable under the action of $G_0$ and by taking modulo $p$, one obtains a representation $\bar \rho_0: G_0 \longrightarrow \GL_r(\bF_p(\xi))$. Similarly, from the representation $\sigma\circ\rho$, we get a lattice $(V_0)_\sigma$ which induces a representation $\overline{\sigma\circ\rho_0}$. Put 
$\bar\rho=\bar\rho_0\otimes_{\bF_p(\xi)}k$. Clearly $d([\rho])=[\bar\rho]$. On the other hand, from the assumption on $p$, $\overline{\sigma\circ\rho_0}=F\circ\bar\rho_0$ where $F$ is the Frobenius homomorphism of $k$. Therefore, $d\circ\sigma([\rho])=d([\sigma\circ\rho])=[F\circ\bar\rho_0]=F\circ d([\rho])$.
\end{proof}

Turning back to \'etale vector bundles. Assume that $K$ is an algebraically closed field of characteristic zero. Let $X$ be a connected smooth projective variety over $K$ and $V$ be a Nori finite vector bundle on $X$. By Tannaka duality, $V$ corresponds to a representation of a finite group scheme $G$ which is always \'etale by a result of Cartier (see \cite[11.4]{wa}). Since $\Rep_K(G)$ is equivalent to $\Rep_KG(K)$, there is a representation $\rho: G(K) \longrightarrow \GL_r(K)$ corresponding to $V$ through these equivalences. For each automorphism $\sigma\in \Aut(K)$, we denote by $V_\sigma$ the vector bundle corresponding to the representation $\sigma\circ\rho$. With these assumptions, we have.
\begin{proposition}\cite[Satz 4.2.1]{sa}\label{prop5}
$h^i_\mathrm{Hdg}(V)=h^i_\mathrm{Hdg}(V_\sigma)$ for all $i\in \mathbb Z, \sigma\in \Aut(K)$.
\end{proposition}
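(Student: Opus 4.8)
The plan is to spread the whole situation out over a Dedekind domain and reduce modulo a large prime $p$, transporting the problem to the positive-characteristic case already settled in Proposition \ref{prop2}, with Lemma \ref{42} supplying the crucial dictionary between the automorphism $\sigma$ and the Frobenius. Everything in sight is defined over a finitely generated subfield of $K$: the variety $X$, the bundle $V$, and (by Brauer's theorem, exactly as in the proof of Lemma \ref{42}) the representation $\rho$, which is realizable over $\bQ(\xi)$ for $\xi$ a primitive $n$-th root of unity, $n=\dim_K K[G]$. Since $\sigma$ acts on $\bQ(\xi)$ by $\xi\mapsto\xi^a$ with $(a,n)=1$, the twist $\sigma\circ\rho$ is realizable over $\bQ(\xi)$ as well. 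I would therefore fix a Dedekind domain $A\supseteq\bZ[\xi]$, with fraction field a finite extension of $\bQ(\xi)$ inside $K$, large enough that $X$ extends to a smooth projective $\mathcal X\to\Spec A$, that $V$ and $V_\sigma$ extend to bundles $\mathcal V,\mathcal V_\sigma$ on $\mathcal X$ built from the integral lattices of $\rho_0$ and $\sigma\circ\rho_0$ used in Lemma \ref{42}, and that $G$ extends to a finite \'etale group scheme over $A$ realizing the Nori correspondence compatibly with base change.

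Next I would invoke Dirichlet's theorem to select a closed point of $\Spec A$ of residue characteristic $p$ with $p\gg 0$, $p\equiv a\pmod n$ and $p\geqslant\dim X$, and with residue field $k$ taken algebraically closed so that Lemma \ref{42} and Proposition \ref{prop2} apply; write $\bar{(\cdot)}$ for reduction to the fibre over this point. By generic flatness, upper semicontinuity of cohomology dimensions, and local constancy of the Euler characteristic, one gets for $p\gg 0$ that $h^i_\mathrm{Hdg}(V)=h^i_\mathrm{Hdg}(\bar V)$ and $h^i_\mathrm{Hdg}(V_\sigma)=h^i_\mathrm{Hdg}(\overline{V_\sigma})$ for every $i$. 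Moreover, for $p\gg 0$ the fibre $\bar{\mathcal X}=\mathcal X_k$ is smooth projective of dimension $\dim X\leqslant p$, and it lifts to $W_2(k)$ as the special fibre of the smooth scheme $\mathcal X$ over the mixed-characteristic base, so all hypotheses of Proposition \ref{prop2} hold over $k$.

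The heart of the argument is then short. By compatibility of the Nori correspondence with reduction, $\bar V$ is the \'etale Nori finite bundle attached to $\bar\rho:=d([\rho])$, whose values lie in $\GL_r(\bF_p(\xi))\subseteq\GL_r(\overline{\bF}_p)$, so $\bar\rho$ is realizable over $\overline{\bF}_p$. The congruence $p\equiv a\pmod n$ is precisely what forces the reduction of $\sigma\circ\rho_0$ to equal $F\circ\bar\rho_0$ as representations (Lemma \ref{42}), whence $\overline{V_\sigma}\cong(F_{\bar{\mathcal X}})^*\bar V$ as bundles. Applying Proposition \ref{prop2} to $\bar V$ with the Frobenius of $k$ gives $h^i_\mathrm{Hdg}(\bar V)=h^i_\mathrm{Hdg}\big((F_{\bar{\mathcal X}})^*\bar V\big)$ for all $i\leqslant p$, hence for all $i$ since $\dim X\leqslant p$. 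Chaining the equalities
$$h^i_\mathrm{Hdg}(V)=h^i_\mathrm{Hdg}(\bar V)=h^i_\mathrm{Hdg}\big((F_{\bar{\mathcal X}})^*\bar V\big)=h^i_\mathrm{Hdg}(\overline{V_\sigma})=h^i_\mathrm{Hdg}(V_\sigma)$$
yields the claim.

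The main obstacle is the bookkeeping in the spreading-out step rather than any new idea. One must guarantee that for $p\gg 0$ the reduction of $V$ is genuinely the bundle attached to the reduced representation $d([\rho])$, i.e.\ that the Tannaka/Nori dictionary is compatible with base change, and that the \emph{individual} Hodge numbers, not merely the Euler characteristic, persist under reduction. Both are standard consequences of generic flatness and cohomology-and-base-change once a suitable integral model is fixed, but they are exactly where the constraint $p\gg 0$ is used, in tandem with the purely representation-theoretic input of Lemma \ref{42}.
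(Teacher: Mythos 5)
Your proposal is correct and follows essentially the same route as the paper: spread $X$, $V$, $G$ out over a Dedekind domain, use Dirichlet's theorem to pick a prime $p\equiv a \pmod n$, invoke Lemma \ref{42} to identify the reduction of $V_\sigma$ with the Frobenius pullback of the reduction of $V$, and conclude by base-change constancy of the Hodge numbers together with Proposition \ref{prop2}. The only cosmetic difference is that you obtain the isomorphism $\overline{V_\sigma}\cong F_{X_k}^*\bar V$ directly at the level of the chosen lattices and apply Proposition \ref{prop2} via $\overline{\bF}_p$-rationality, whereas the paper compares classes in the Grothendieck group, uses semi-simplicity of $\Rep_k(G_k)$ for $p\gg 0$ to upgrade this to an isomorphism of bundles, and then quotes Corollary \ref{28}.
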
 
\begin{proof}
By a standard argument in algebraic geometry, there is a subfield $K_0\subset K$ which is a finite (possibly transcendental) extension of $\mathbb Q$ such that $X, V, G$ have models over $K_0$ (see \cite[Proof of Th\'eor\`eme 2.1]{di}). There is an open subset $\Spec A \subset \Spec \fO_{K_0}$ such that $\Spec A/\Spec \mathbb Z$ is smooth and there are smooth models $\fX, \fG$ defining over $\Spec A$ whose generic fibers are $X, G$ respectively. Moreover, localizing $A$ if necessary, we can assume that the $A$-modules $H^i_\mathrm{Hdg}(\fX, \fV)$ are locally free of constant rank over $A$, where $\fV$ is a model of $V$ on $\fX/\Spec(A)$ and we denote this rank by  $h^i_\mathrm{Hdg}(\fV)$ too,  (see \cite[Section II.5]{mu} or \cite[Section III.12]{ha}). Due to Dirichlet's theorem on arithmetic progressions, there are infinitely many prime numbers $p$ such that $p\equiv a\mod n$. Let $p$ be such a prime such that $p$ is not invertible in $A$. Localizing the ring $A$ at a minimal prime ideal containing $pA$, we get a $1$-dimensional regular local ring $A_0$. Note that $A_0$ and $A$ have the same field of fractions $K_0$ and the residue field $k$ of $A_0$ is of characteristic $p$. Further more, by base change one gets  $h^i_\mathrm{Hdg}(V)=h^i_\mathrm{Hdg}(\fV)=h^i_\mathrm{Hdg}(\fV\otimes_A K_0)$ and $h^i_\mathrm{Hdg}(\fV)=h^i_\mathrm{Hdg}(\fV\otimes_{A}A_0)=h^i_\mathrm{Hdg}(\fV\otimes_A k)$, for all $i\in\bZ$ (see \cite[Section II.5]{mu} or \cite[Section III.12]{ha}). So we can assume from the beginning that $A$ is a discrete valuation ring with the residue field $k$ of characteristic $p\gg0$. Denote the fibers of $\fX, \fV, \fG$ at $p$ by $X_k, V_k, G_k$. Since $p\gg 0$, we can assume also $G_k$ is \'etale over $k$.

With the notations before Lemma \ref{41}, there is a decomposition map $d: \fR(G_K) \longrightarrow \fR(G_k)$ (we use freely a finite extension of $K_0$ if necessary) which is in fact an isomorphism. Let $\bar\rho: G_k \longrightarrow \GL_r(k)$ be the representation defining $V_k$, then $d([\rho])=[\bar\rho]$. Let $F$ and $F_{X_k}$ be the Frobenius morphisms on $k$ and $X_k$ respectively. Using Lemma \ref{42}, we obtain $d([\sigma\circ\rho])=[F\circ\bar\rho]$. By the choice of $p$, the category $\Rep_kG_k$ is semi-simple. Combining these facts we see that $F_{X_k}^*V_k$ is isomorphic to the fiber at $p$ of $\fV_\sigma$.  Therefore, from Corollary \ref{28} we obtain 
$$h^i_\mathrm{Hdg}(V)=h^i_\mathrm{Hdg}(V_k)=h^i_\mathrm{Hdg}((V_\sigma)_k)=h^i_\mathrm{Hdg}(V_\sigma),$$
for all $i\in \bZ$.
\end{proof}

\begin{remark}\label{43} (i) It should be noted that the answer to Question \ref{star} is always positive at the zero level. Indeed, let $V$ be an \'etale Nori finite vector bundle and $\rho: G(k) \longrightarrow GL_r(k)$ be the associated representation of $V$. One gets
$$h^0_\mathrm{Hdg}(V)=\dim_k \Hom_{\fO_X}(\fO_X, V)=\dim_k \Hom_{G(k)}(k, \rho)=\dim_k (k^r)^\rho,$$
where in $\Hom_{G(k)}(k, \rho)$, $k$ is the trivial representation and $(k^r)^\rho$ is the invariant subspace of $k^r$ under $\rho$. Clearly, $(k^r)^\rho\simeq (k^r)^{\sigma\circ\rho}$ for any $\sigma\in \Aut(k)$. So, $h^0_\mathrm{Hdg}(V)=h^0_\mathrm{Hdg}(V_\sigma)$.

\item (ii) Using (i), the Riemann-Roch theorem on curves allow to give a complete answer to Question \ref{star} for this case: If $X$ is a smooth curve over $k=\bar k$ of any characteristic and $V$ is an \'etale Nori finite vector bundle on $X$, then $h^i_\mathrm{Hdg}(V)=h^i_\mathrm{Hdg}(V_\sigma)$ for all $i\in \bZ$ and $\sigma\in\Aut(k)$. Let $\check V$ be the dual vector bundle of $V$, one has $h^0_\mathrm{Hdg}(V)=h^0_\mathrm{Hdg}(V_\sigma)$ and $h^2_\mathrm{Hdg}(V)=h^0_\mathrm{Hdg}(\check V)=h^0_\mathrm{Hdg}(\check V_\sigma)=h^2_\mathrm{Hdg}(V_\sigma)$ by (i) and Serre duality theorem. Moreover, $h^1_\mathrm{Hdg}(V)=\chi(V)-\chi(V\otimes_{\fO_X}\Omega_{X/k}^1)+h^0(V)+h^0(\check V)=\chi(V)+h^0(V)+h^0(\check V)$. Note that $\deg(V)=0$ since $V$ is defined by the representation of a finite group (see also \cite[Chapter 1, Proposition 3.4]{no}). Then Riemann-Roch theorem implies 
$h^1_\mathrm{Hdg}(V)=h^1_\mathrm{Hdg}(V_\sigma)$.
\end{remark}

\begin{remark} One also might think about a converse of Question \ref{star}: Let $X$ be a smooth geometrically connected projective varitey over $k=\bar k$ of characteristic $p>0$ and $V$ be a Nori finite bundle on $X$. If $h^i_\mathrm{Hdg}(V)=h^i_\mathrm{Hdg}((F_X^n)^*V)$ for all $i\geqslant 0$ and $n>0$, is $V$ \'etale? If the Tannaka group scheme of $V$ is local, Mehta and Subramanian \cite[Section 2]{ms} showed that $(F_X^n)^*V$ is a trivial bundle for some $n>0$. Hence, if $V$ is not trivial, $h^0(V)<h^0((F_X^m)^*V)=\mathrm{rank}(V)$. Unfortunately, the answer for the question above is negative in general. For a counter example, let $X$ be a hyperelliptic curve. Take an extension $V$ of $O_X$ by it self such that the cohomology class in $\Ext^1_{\fO_X}(\fO_X, \fO_X)\simeq H^1(X, O_X)$ is $\alpha+\beta$, where $\alpha, \beta\not=0$,
$F_X^*(\alpha)=0$ and $F_X^*(\beta)=\beta$. Then $V$ is Nori finite but its Tannaka group scheme is neither \'etale nor local (see \cite{ms, ehs}). We have $h^0(V)=h^0((F^n)^* V)=1$ for all $n>0$. By the same argument as in Remark \ref{43}(ii) we obtain $h^i_\mathrm{Hdg}(V)=h^i_\mathrm{Hdg}((F^n)^*
V)$ for all $n>0, i\geqslant 0$.
\end{remark}

%%%%%%%%%%%%%%%%%%%%%%%%%%%%%%%%%%%%%%%%%%%%%%%%
%%%%%%%%%%%%%%%%%%%%%%%%%%%%%%%%%%%%%%%%%%%%%%%%

\section{Frobenius periodic vector bundles}
In this section we always assume that $k$ is an algebraically closed field of characteristic $p>0$ and $X$ is a geometrically connected projective variety over $k$ with a fixed $k$-rational point $x\in X(k)$. We denote by $F_X: X \longrightarrow X$ the Frobenius morphism of $X$. In the proof of Proposition \ref{prop2}, the key point is if the associated representation of the monodromy group is realizable over a finite field then the corresponding vector bundle via Tannaka duality is Frobenius periodic. This notion is defined bellow.

\begin{definition}
A vector bundle $V$ on $X$ is called Frobenius periodic if $(F_X^n)^* V\simeq V$ for some $n>0$.
\end{definition}
\begin{remark}\label{32}
 (i) Following Lange-Stuhler \cite[Satz 1.4]{ls}, a Frobenius periodic vector bundle is \'etale trivializable. Thus $V$ is \'etale Nori finite by Remark \ref{23}(i). Combining this with Remark \ref{23}(i) and using \cite[Satz 1.4]{ls} again, if $k$ is the algebraic closure of $\mathbb F_p$ then Frobenius periodicity is equivalent to being \'etale Nori finite.

\item(ii) There are examples of \'etale Nori finite vector bundles which are not Frobenius periodic. Of course, to search for such examples one needs to assume the field has non-zero transcendental degree over $\bF_p$. Laszlo's example in \cite[Before Theorem 1.2]{bd} and Brenner-Kaid's example \cite[Example 2.10]{bk} provide such kind of vector bundles.
\end{remark}

Using this remark we get an analog for vector bundles of the main result of \cite{bd}.
\begin{corollary}\label{33} 
Let $V$ be a vector bundle of rank $r$ on $X$. Assume that $V$ is trivialized by an \'etale Galois covering 
$\pi: Y \longrightarrow X$ with $\deg(\pi)$ prime to $p$. Then $V$ is Frobenius periodic.
\end{corollary}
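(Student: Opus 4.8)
The plan is to encode the triviality of $V$ along the cover as a representation of $H=\Gal(Y/X)$, and then to use that $\deg(\pi)=|H|$ is prime to $p$ to force the successive Frobenius twists of this representation to repeat.

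First I would descend along $\pi$. Since $\pi^*V\simeq\fO_Y^{\oplus r}$, the natural $H$-action on $Y$ turns this trivialization into a descent datum, that is, a representation $\rho\colon H\longrightarrow\GL_r(k)$, and $V\simeq(\pi_*\fO_Y^{\oplus r})^H$. The essential observation is that this correspondence is compatible with Frobenius pullback. As the absolute Frobenius is functorial, $F_X\circ\pi=\pi\circ F_Y$, so $(F_X^n)^*V$ is again trivialized by $\pi$, its pullback to $Y$ being $(F_Y^n)^*\fO_Y^{\oplus r}\simeq\fO_Y^{\oplus r}$; the new descent datum is obtained by applying $F_Y^n$, which on the constant entries in $\GL_r(k)$ is the Frobenius $F$ of $k$. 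Thus $(F_X^n)^*V$ corresponds to $F^n\circ\rho$, matching the identification $V_{F^n}\simeq(F_X^n)^*V$ recalled just before Proposition \ref{prop1}.

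Next I would invoke the hypothesis $p\nmid|H|$. By Maschke's theorem $k[H]$ is semisimple, so every finite-dimensional $H$-representation is a sum of irreducibles; over the algebraically closed field $k$ there are only finitely many irreducibles, and fixing the total dimension $r$ bounds the multiplicities, so $H$ admits only finitely many isomorphism classes of $r$-dimensional representations over $k$. Hence the sequence $\rho,\,F\circ\rho,\,F^2\circ\rho,\dots$ must repeat: $F^a\circ\rho\simeq F^b\circ\rho$ for some $a<b$. Since $k$ is algebraically closed, $F$ is an automorphism of $k$, and applying $F^{-a}$ entrywise to this isomorphism yields $\rho\simeq F^{b-a}\circ\rho$. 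Translating back through the first step gives $V\simeq(F_X^{b-a})^*V$, so $V$ is Frobenius periodic. One could equally conclude by the implication extracted inside the proof of Proposition \ref{prop2}: $\rho$ is realizable over a finite field $\bF_{p^N}$ precisely because $p\nmid|H|$, and then $F^N\circ\rho=\rho$.

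I expect the only genuine subtlety to be the descent step together with its Frobenius-compatibility; once that identification is in place, the representation-theoretic finiteness—where the assumption that $\deg(\pi)$ is prime to $p$ is indispensable, via semisimplicity of $k[H]$—is entirely elementary, and the automorphism property of $F$ on $k=\bar k$ does the rest.
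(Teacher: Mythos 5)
Your proof is correct, and its engine is exactly the paper's: from $p\nmid\deg(\pi)$ one gets a finite group of order prime to $p$ acting, semisimplicity of its representation category, hence only finitely many isomorphism classes of rank-$r$ representations, hence the twists $F^n\circ\rho$ must repeat, and since $F$ is an automorphism of $k=\bar k$ one cancels to get $\rho\simeq F^m\circ\rho$ and so $(F_X^m)^*V\simeq V$. Where you genuinely diverge is in how the representation is attached to $V$ and why Frobenius pullback of the bundle matches Frobenius twist of the representation. The paper stays inside Nori's Tannakian formalism: $V$ is \'etale Nori finite because it is \'etale trivializable (Remark \ref{23}(i)), its Tannaka group $G(k)$ is a quotient of the Galois group $\Gamma$ of $\pi$ (Remark \ref{23}(ii)), so $p\nmid\,^{\#}G(k)$, and the compatibility $(F_X^n)^*V\leftrightarrow F^n\circ\rho$ is the identification $V_{F^n}\simeq (F_X^n)^*V$ that the paper quotes as ``easy to see''. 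You instead bypass Tannaka duality entirely and work with the full Galois group $H=\Gamma$ via explicit descent: the trivialization $\pi^*V\simeq\fO_Y^{\oplus r}$ plus the $H$-equivariant structure gives a genuine homomorphism $\rho\colon H\to\GL_r(k)$ (here you are implicitly using that $Y$ is connected and projective, so $H^0(Y,\fO_Y)=k$ and the descent cocycle has constant entries --- worth stating), and the Frobenius compatibility follows by hand from $F_X\circ\pi=\pi\circ F_Y$ together with $F_Y^*$ raising constant matrix entries to the $p$-th power. Your route is more self-contained: it proves, rather than cites, the key identification, and it needs no input from Nori's theory beyond the statement of the corollary. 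The paper's route is shorter because it reuses Remarks \ref{23} and \ref{32} and works with the (possibly smaller) monodromy group $G(k)$; both exploit the prime-to-$p$ hypothesis at the same, indispensable place, namely semisimplicity, without which the finiteness of rank-$r$ representations fails.
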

\begin{proof}
$V$ is \'etale Nori finite by Remark \ref{32}(i). Let $G$ be the corresponding finite \'etale group scheme and $\rho: G(k) \rightarrow GL_r$ the representation corresponding to $V$. By Remark \ref{23}(ii), $G(k)$ is a quotient of the structure group $\Gamma$ of $\pi$, thus $p\nmid\, ^\#G(k)$. This implies the semi-simplicity of the category $\Rep_k(G(k))$. Since there are only finitely many representations of rank $r$ in $\Rep_k(G(k))$, it is clear that $(F^n)\circ\rho\simeq \rho$ for $F^n$ some power of the Frobenius of $k$, or equivalently, $(F_X^n)^*V\simeq V$.
\end{proof}

In the next, we will give a characterization of Frobenius periodicity of vector bundles in terms of the rationality of the representations of the monodromy group. We first need a lemma in $p$-linear algebra.
\begin{lemma}\label{31}
Let $M=(\lambda_{ij})_{r\times r}\in M_{r\times r}(k)$. Let $q\in \mathbb N$ such that $p\mid q$. Denote
$$\Omega=\{(x_1, \ldots, x_r)\in k^r: (x_1^q, \ldots, x_r^q)=(x_1, \ldots, x_r)M\}.$$
Then the cardinality $^\#\Omega\leqslant q^r$ and the equality occurs if and only if $M$ is invertible.
\end{lemma}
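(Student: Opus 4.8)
The plan is to read $\Omega$ as the set of $k$-rational points of $\Spec A$, where
$$A = k[x_1, \ldots, x_r]/(f_1, \ldots, f_r), \qquad f_j = x_j^q - \sum_{i=1}^r \lambda_{ij}\, x_i \quad (1 \le j \le r),$$
and to extract both assertions from the structure of the finite $k$-algebra $A$. Each relation $f_j = 0$ rewrites $x_j^q$ as a polynomial of strictly smaller degree, so the monomials $x_1^{a_1}\cdots x_r^{a_r}$ with $0 \le a_j < q$ span $A$ over $k$; thus $A$ is finite-dimensional with $\dim_k A \le q^r$. Since $k$ is algebraically closed, $\Omega$ is precisely the set of maximal ideals of $A$, and the decomposition $A \simeq \prod_{\fm} A_{\fm}$ into local Artinian factors gives $\#\Omega \le \dim_k A \le q^r$. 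This already yields the inequality $\#\Omega \le q^r$.

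To analyze equality I would first show $\dim_k A = q^r$ unconditionally. For any degree-compatible monomial order the leading term of $f_j$ is $x_j^q$ (as $q \ge p \ge 2$, the degree-$q$ term dominates the linear part), and these leading terms are pairwise coprime; by Buchberger's product criterion (coprime leading terms) every $S$-polynomial reduces to zero, so $f_1, \ldots, f_r$ form a Gröbner basis and the $q^r$ standard monomials above are in fact a $k$-basis of $A$. (Equivalently, $k[x_1, \ldots, x_r]$ is free of rank $q^r$ over its subring $k[f_1, \ldots, f_r]$, and $\Spec A$ is the fibre over the origin.) Consequently $\#\Omega = q^r$ holds if and only if every local factor $A_{\fm}$ equals $k$, that is, if and only if $A$ is reduced, equivalently étale over $k$.

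The crucial point, and the reason the criterion involves $M$ itself, is that $p \mid q$ forces $q = 0$ in $k$, so
$$\frac{\partial f_j}{\partial x_i} = q\, x_j^{q-1}\delta_{ij} - \lambda_{ij} = -\lambda_{ij}.$$
Hence the Jacobian matrix of the system is the constant matrix $-M$, the same at every point of $\Omega$. By the Jacobian criterion for the zero-dimensional complete intersection $\Spec A$, the algebra is étale at a point $P$ exactly when this Jacobian has full rank $r$, i.e. exactly when $M$ is invertible; and since the Jacobian is independent of $P$, either all points of $\Omega$ are étale or none is. If $M$ is invertible then each $A_{\fm} = k$, so $\#\Omega = \dim_k A = q^r$. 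If $M$ is not invertible then each local factor has length at least $2$, whence $\#\Omega \le \tfrac12 \dim_k A = \tfrac12 q^r < q^r$ (and $\#\Omega = 0 < q^r$ if $\Omega = \emptyset$). This establishes the equality clause.

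I expect the only delicate bookkeeping to be in the last paragraph: one must use that $\Spec A$ is genuinely zero-dimensional, guaranteed by $\dim_k A < \infty$, so that full rank of the Jacobian is equivalent to étaleness at a point, and that over an algebraically closed $k$ the notions ``reduced'' and ``étale'' coincide for a finite algebra. The heart of the matter is the elementary observation that $p \mid q$ collapses the Frobenius part $q\, x_j^{q-1}$ of each derivative, leaving the linear datum $M$ as the sole obstruction to separability; this is precisely the higher-rank analogue of the one-variable fact that $x^q - \lambda x$ is separable, with $q$ distinct roots, exactly when $\lambda \neq 0$.
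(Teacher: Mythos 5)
Your proposal is correct and follows essentially the same route as the paper: form the Artinian algebra $A = k[x_1,\ldots,x_r]/(f_1,\ldots,f_r)$ of dimension $q^r$, identify $\Omega$ with $\Spec(A)(k)$ so that $^\#\Omega \leqslant \dim_k A$ with equality exactly when $A/k$ is \'etale, and use $p \mid q$ to kill the derivatives $q x_j^{q-1}$ so that \'etaleness is equivalent to invertibility of $M$. The only cosmetic difference is that you invoke the pointwise Jacobian criterion (plus a Gr\"obner basis argument to pin down $\dim_k A = q^r$, which the paper simply asserts via the monomial basis), whereas the paper computes $\Omega^1_{A/k}$ globally as the cokernel of $M$ acting on $(dX_1,\ldots,dX_r)A$ --- the same fact in two guises.
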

\begin{proof}
 Let $I$ be the ideal of $k[X_1, \ldots, X_r]$ spanned by  $X_i^q-\sum_{j=1}^r\lambda_{ji}X_j, \ i=1, \ldots, r$ and let $A=k[X_1, \ldots, X_r]/I$. Then $A$ is an Artin ring with $k$-basis $X_1^{\alpha_1}\ldots X_r^{\alpha_r}, 0\leq \alpha_1, \ldots, \alpha_r<q$, thus of $k$-dimension $q^r$. Moreover, one has $\Omega=\Spec(A)(k)$. Thus $^\#\Omega\leq q^r$ with equality precisely when $A/k$ is \'etale as $k$ is algebraically closed. Since
$$\Omega^1_{A/k}=(dX_1, \ldots, dX_n)A/(\sum_{j=1}^r\lambda_{ji}dX_j: i=1,\ldots, r),$$
$A/k$ is \'etale if and only if $M$ is invertible.
\end{proof}

\begin{proposition}\label{prop3}
Let $X$ be a geometrically connected projective variety over $k$ with a fixed $k$-rational point and $V$ a vector bundle of rank $r$ on $X$. Then $V$ is Frobenius periodic if and only if $V$ is \'etale Nori finite and the associated representation $\rho: G(k) \longrightarrow \GL_r(k)$ is realizable over $\overline{\mathbb F}_p$ (hence, over a finite field).
\end{proposition}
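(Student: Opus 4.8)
The statement is an equivalence, and I would prove the two implications separately. The backward direction is a short repackaging of the argument already used for Proposition \ref{prop2}, while the forward direction is the substantial one and is where Lemma \ref{31} is designed to be used. For the direction ``realizable $\Rightarrow$ Frobenius periodic'', suppose $V$ is \'etale Nori finite with $\rho$ conjugate over $k$ to some $\rho_0\colon G(k)\to\GL_r(\overline{\mathbb F}_p)$. Since $G(k)$ is finite, the image $\rho_0(G(k))$ is a finite subgroup of $\GL_r(\overline{\mathbb F}_p)$, so all its matrix entries lie in a single finite field $\mathbb F_q$ with $q=p^N$. The $N$-th power $F^N$ of the Frobenius of $k$ acts as $x\mapsto x^q$ and hence fixes $\mathbb F_q$ pointwise, so $F^N\circ\rho_0=\rho_0$; conjugating by the same matrix gives $F^N\circ\rho\simeq F^N\circ\rho_0=\rho_0\simeq\rho$, and translating back through Tannaka duality yields $(F_X^N)^*V\simeq V$, i.e.\ $V$ is Frobenius periodic.

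For the direction ``Frobenius periodic $\Rightarrow$ realizable'', I would first invoke Lange--Stuhler via Remark \ref{32}(i) to conclude that a Frobenius periodic bundle is \'etale trivializable, hence \'etale Nori finite; this is what makes the representation $\rho\colon G(k)\to\GL_r(k)$ available in the first place. Set $q=p^n$ where $(F_X^n)^*V\simeq V$. Since Tannaka duality is an equivalence, the bundle isomorphism becomes an isomorphism of representations $F^n\circ\rho\simeq\rho$; concretely there is $M\in\GL_r(k)$ with $\rho(g)^{(q)}=M\rho(g)M^{-1}$ for all $g\in G(k)$, where $(-)^{(q)}$ denotes the entrywise $q$-th power. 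The idea is to manufacture from $M$ a Frobenius-fixed $\mathbb F_q$-form of $\rho$. I define the $q$-semilinear operator $\Phi\colon k^r\to k^r$ by $\Phi(v)=M^{-1}v^{(q)}$ and check, using the characteristic-$p$ identity $(Av)^{(q)}=A^{(q)}v^{(q)}$ together with $\rho(g)^{(q)}=M\rho(g)M^{-1}$, that $\Phi$ commutes with every $\rho(g)$.

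The crux is then to apply Lemma \ref{31}. The fixed set $W=\{v\in k^r:\Phi(v)=v\}$ is exactly $\{v:v^{(q)}=Mv\}$, which, after transposing into the row-vector form of the lemma, is the set $\Omega$ attached to $M^{T}$; since $M$, and hence $M^{T}$, is invertible, Lemma \ref{31} gives $^\#W=q^r$. As the Frobenius is additive and $\mathbb F_q$-homogeneous, $W$ is an $\mathbb F_q$-subspace of $k^r$ of dimension exactly $r$, and standard Frobenius descent (a minimal-relation argument showing that $\Phi$-fixed, $\mathbb F_q$-independent vectors are $k$-independent) shows that any $\mathbb F_q$-basis of $W$ is simultaneously a $k$-basis of $k^r$. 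Because $\Phi$ commutes with $\rho(g)$ and $W$ is its fixed locus, each $\rho(g)$ preserves $W$ and acts $\mathbb F_q$-linearly there; writing $\rho(g)$ in an $\mathbb F_q$-basis of $W$ therefore produces a matrix in $\GL_r(\mathbb F_q)$ that also represents $\rho(g)$ on $k^r$. Thus $\rho$ is conjugate over $k$ to a representation valued in $\GL_r(\mathbb F_q)\subset\GL_r(\overline{\mathbb F}_p)$, which is the desired realizability.

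The main obstacle, and the step I would treat most carefully, is the passage from the single conjugating matrix $M$ to a full $r$-dimensional space of Frobenius-fixed vectors: this is precisely where the equality case of Lemma \ref{31} (cardinality $q^r$ exactly when $M$ is invertible) is indispensable, since without the invertibility of $M$ the fixed space could fail to span $k^r$ and the descent would break down. The remaining points --- the entrywise-Frobenius identities, matching the column/row conventions between $\Phi$ and the lemma, and the fact that Tannaka duality turns $(F_X^n)^*V\simeq V$ into $F^n\circ\rho\simeq\rho$ --- are routine and I would only verify them briefly.
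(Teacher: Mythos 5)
Your proof is correct, and its skeleton matches the paper's: both directions begin the same way (Lange--Stuhler via Remark \ref{32}(i) gives \'etale Nori finiteness, and finiteness of the image gives the easy implication, exactly as in Proposition \ref{prop2}), and in both the crux of the forward implication is the equality case of Lemma \ref{31} applied to the invertible matrix $M$ with $F^n\circ\rho=M^{-1}\rho M$ (your $M$ is the paper's $M^{-1}$, which is immaterial). Where you genuinely diverge is the step that converts the $q^r$ twisted-Frobenius-fixed vectors into a basis of $k^r$. The paper works directly with the matrix equation $N_n=NM$: it shows the solution set $\Omega$ of $(x_1^q,\ldots,x_r^q)=(x_1,\ldots,x_r)M$ spans $k^r$ by a \emph{second} application of Lemma \ref{31}, namely that intersecting $\Omega$ with any hyperplane through $0$ produces a system of the same shape in $r-1$ variables, hence at most $q^{r-1}<q^r={}^\#\Omega$ points, so $\Omega$ lies in no hyperplane; a $k$-basis chosen inside $\Omega$ then gives the rows of the conjugating matrix $N$, and $\rho_1=N\rho N^{-1}$ is fixed by $F^n$. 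You instead package the twisted Frobenius as the $q$-semilinear operator $\Phi(v)=M^{-1}v^{(q)}$ commuting with $\rho$, use Lemma \ref{31} only once (on $M^{T}$, after transposing) to get $^\#W=q^r$ for the fixed locus $W$, note that $W$ is an $\bF_q$-subspace of dimension $r$, and invoke semilinear (Galois/Frobenius) descent via the minimal-relation argument to see that an $\bF_q$-basis of $W$ is a $k$-basis, on which $\rho$ visibly acts through $\GL_r(\bF_q)$ since $\rho(g)$ preserves $W$. Your route is the standard descent proof of the Lang--Steinberg theorem for $\GL_r$ --- precisely the theorem the paper's remark after the proposition says its argument reproves --- and is more structural, exhibiting the $\bF_q$-form intrinsically as the action on $W$ rather than through an explicit conjugation; the paper's route stays entirely within point-counting and $k$-linear algebra, avoiding semilinear descent at the cost of the extra hyperplane-slicing use of Lemma \ref{31}. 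Both arguments are complete.
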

\begin{proof}
The sufficient condition is clear. Conversely, assume $V$ is Frobenius periodic, then $V$ is \'etale Nori finite by Remark \ref{32}(i). It remains to show that $\rho$ is realizable over $\overline{\mathbb F}_p$. Let $n\in \mathbb N$ such that $F^n\circ \rho\simeq \rho$, where $F\in \Aut(k)$ is the Frobenius. By definition there is an invertible matrix $M=(\lambda_{ij})_{r\times r}\in \GL_r(k)$ such that $F^n\circ\rho(g)=M^{-1}\rho(g)M, \forall g\in G(k)$. We will show the existence of a base change matrix $N=(x_{ij})_{r\times r}$ such that if one puts $\rho_1(g)=N\rho(g)N^{-1}$ for all $g\in G(k)$, then $F^n\circ\rho_1=\rho_1$. Obviously such an $\rho_1$ is realizable over $\mathbb F_{p^n}$. Denote $q=p^n$ and $N_n=(x_{ij}^q)_{r\times r}$. From the equations of $\rho$ and $\rho_1$ we have
$$N\rho(g) N^{-1}=N_n F^n\circ\rho(g)N_n^{-1}=(N_nM^{-1}) \rho(g) (N_nM^{-1})^{-1},\ g\in G(k).$$
Hence it is enough to show that the equation $N_n=NM$ has an invertible solution $N$. Equivalently, the set
$$\Omega=\{(x_1, \ldots, x_r)\in k^r: (x_1^q, \ldots, x_r^q)=(x_1, \ldots, x_r)M\}$$
generates the vector space $k^r$. By Lemma \ref{31}, $^\#\Omega=q^r$. So it suffices to show that for any hyperplane $H\ni 0$ in $k^r$, $^\#(\Omega\cap H)<q^r$. Without lost of generality, assume the defining equation of $H$ is
$x_r=\sum_{i=1}^{r-1}\lambda_ix_i$. Let $(x_1, \ldots, x_r)\in \Omega\cap H$. Then
$$(x_1^q, \ldots, x_r^q)=(x_1, \ldots, x_r)M,$$
$$x_r=\sum_{i=1}^{r-1}\lambda_ix_i.$$
This implies that $(x_1^q, \ldots, x_{r-1}^q)=(x_1, \ldots, x_{r-1})M_1$, where $M_1=(\lambda_{ij}+\lambda_i)_{r-1 \times r-1}$. By Lemma \ref{31} again, the number of solutions of this system of equations does not exceed $q^{r-1}$. Thus $^\#(\Omega\cap H)<q^r$ and $\Omega$ generates the vector space $k^r$. We can take a basis $e_1, \ldots, e_r\in \Omega$ to formulate the rows of $N$.
\end{proof}

\begin{remark} The second half of the proof of Proposition \ref{prop3} is in fact another proof of Lang-Steinberg theorem \cite{st} for the general linear algebraic group $\GL_r$ with the action of the Frobenius homomorphism. The proof above uses Lemma \ref{31}. As it is elementary, we leave it for sake of completeness in this note.
\end{remark}

In the rest of this section, we will relate Frobenius periodic vector bundles with $p$-quotients of the fundamental group of $X$. Let $\mathcal C^\text{\'et}(X)$ be the category of all \'etale Nori finite vector bundles on $X$. This is a $k$-linear abelian rigid category which is by Tannaka duality equivalent to the representation category of a profinite group scheme $\pi^\text{\'et}(X, x)$ (see \cite{ehs}). In fact, $\pi^\text{\'et}(X, x)(k)\simeq \pi_1(X, x)$ is the \'etale fundamental group of $X$ defined by Grothendieck. If $V$ is Nori finite with a finite \'etale group scheme $G$ then $G$ is a quotient of $\pi^\text{\'et}(X, x)$. We will say that a group (respectively, a group scheme) has a $p$-quotient if it has a non-trivial quotient which is a $p$-group (respectively, a $p$-group scheme). The next proposition is in the same line with \cite[Theorem 1.10 and begining of \S2]{gi}.

\begin{proposition}\label{prop4} Let $X$ be a geometrically connected projective variety over $k$ with a $k$-rational point $x\in X(k)$. The following conditions are equivalent:

\item(i) The \'etale fundamental group scheme $\pi^\text{\'et}(X, x)$ has no $p$-quotients.

\item(ii) The \'etale fundamental group $\pi_1(X, x)$ has no $p$-quotients.

\item(iii) The induced homomorphism $F_X^*: H^1(X, \fO_X) \rightarrow H^1(X, \fO_X)$ is nilpotent, that is, for each $\alpha\in H^1(X, \fO_X)$, there is some $n>0$ such that $(F_X^*)^n(\alpha)=0$.

\item(iv) Every Frobenius periodic vector bundle whose monodromy group is a $p$-group is trivial.
\end{proposition}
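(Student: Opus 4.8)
The plan is to establish the chain (i) $\Leftrightarrow$ (ii) $\Leftrightarrow$ (iii) and then to close the loop through (iv) by proving (i) $\Rightarrow$ (iv) $\Rightarrow$ (ii).

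First, (i) $\Leftrightarrow$ (ii) is essentially formal. Since $\pi^\text{\'et}(X,x)$ is a profinite \'etale group scheme over the algebraically closed field $k$, every finite quotient group scheme is again \'etale, hence is the constant group scheme attached to a finite quotient of $\pi^\text{\'et}(X,x)(k)\simeq\pi_1(X,x)$; and an \'etale $p$-group scheme over $k=\bar k$ is exactly a constant finite $p$-group. Thus a nontrivial $p$-group-scheme quotient of $\pi^\text{\'et}(X,x)$ is the same datum as a nontrivial finite $p$-group quotient of $\pi_1(X,x)$, giving (i) $\Leftrightarrow$ (ii).

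The heart of the argument is (ii) $\Leftrightarrow$ (iii). Since the Frattini quotient of a nontrivial finite $p$-group is a nontrivial elementary abelian $p$-group and so surjects onto $\bZ/p$, the group $\pi_1(X,x)$ has a $p$-quotient if and only if $\Hom(\pi_1(X,x),\bZ/p)=H^1_{\text{\'et}}(X,\bZ/p)\neq 0$. I would then feed the Artin--Schreier sequence $0\to\bZ/p\to\mathbb G_a\xrightarrow{F-1}\mathbb G_a\to 0$ of \'etale sheaves into its cohomology exact sequence; using that $X$ is projective and connected, so $H^0(X,\fO_X)=k$ and $F-1$ is surjective on $k$ (as $k=\bar k$), this yields $H^1_{\text{\'et}}(X,\bZ/p)\cong\ker\bigl(F-1\mid H^1(X,\fO_X)\bigr)$, where the Frobenius $F$ on cohomology is precisely $F_X^*$. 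It remains to show that this kernel is nonzero exactly when $F_X^*$ fails to be nilpotent. For this I would invoke the structure theory of Frobenius-semilinear ($p$-linear) endomorphisms of a finite-dimensional vector space over an algebraically closed field: there is a canonical decomposition $H^1(X,\fO_X)=W_s\oplus W_n$ on which $F_X^*$ is bijective, respectively nilpotent. On $W_n$ the operator $F_X^*-1$ is invertible, while on $W_s$ the Artin--Schreier surjectivity over $\bar k$ makes $\ker(F_X^*-1)$ an $\bF_p$-structure of dimension $\dim_k W_s$. Hence $\ker(F_X^*-1)\neq 0$ iff $W_s\neq 0$ iff $F_X^*$ is not nilpotent, and (noting that pointwise nilpotence and operator nilpotence agree in finite dimension) this gives (ii) $\Leftrightarrow$ (iii).

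Finally I close the loop through (iv). For (i) $\Rightarrow$ (iv), let $V$ be Frobenius periodic with monodromy group $G$ a $p$-group scheme; by Proposition \ref{prop3} $V$ is \'etale Nori finite, so $G$ is \'etale and is a quotient of $\pi^\text{\'et}(X,x)$, whence $G$ is a $p$-group quotient and must be trivial by (i), forcing $V$ to be trivial. For the converse (iv) $\Rightarrow$ (ii), I argue by contraposition: given a surjection $\pi_1(X,x)\twoheadrightarrow G_0$ onto a nontrivial finite $p$-group, I take a faithful representation $\rho_0\colon G_0\to\GL_r(\overline{\mathbb F}_p)$ (for instance the regular representation), pull it back to $k$ and compose with the quotient $\pi^\text{\'et}(X,x)\to G_0$ to obtain an object $V\in\mathcal C^\text{\'et}(X)$. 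Since $\rho_0$ is realizable over $\overline{\mathbb F}_p$, Proposition \ref{prop3} shows $V$ is Frobenius periodic; its monodromy group is the image of $\rho_0$, namely $G_0$, a nontrivial $p$-group; and $V$ is nontrivial because a trivial bundle has trivial monodromy. This contradicts (iv). The main obstacle is the middle step: pinning down the Artin--Schreier identification and, above all, the $p$-linear algebra dichotomy between a nonzero fixed space of $F_X^*-1$ and non-nilpotence of $F_X^*$.
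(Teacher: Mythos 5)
Your proof is correct, but it takes a genuinely different route through the crucial middle step. The paper closes the loop as (ii) $\Rightarrow$ (iii) $\Rightarrow$ (iv) $\Rightarrow$ (ii), working entirely with bundles: for (ii) $\Rightarrow$ (iii) it takes a Frobenius-fixed class $\alpha$ supplied by the decomposition $H^1(X,\fO_X)=H_{\text{\'et}}\oplus H_{loc}$ of \cite{mu}, realizes $\alpha$ as a self-extension $V$ of $\fO_X$, and notes that the associated rank-$2$ representation is unipotent, so its image is a $p$-quotient of $\pi_1(X,x)$; for (iii) $\Rightarrow$ (iv) it uses Proposition \ref{prop3} plus unipotence of $p$-groups to extract from a nontrivial Frobenius periodic $V$ a rank-$2$ subquotient whose extension class is a nonzero Frobenius-fixed element of $H^1(X,\fO_X)$. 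You instead prove (ii) $\Leftrightarrow$ (iii) in one stroke via the Artin--Schreier identification $\Hom(\pi_1(X,x),\bZ/p)\simeq H^1_{\text{\'et}}(X,\bZ/p)\simeq \Ker(F_X^*-1)$ on $H^1(X,\fO_X)$, combined with the same semilinear Fitting decomposition (your $W_s\oplus W_n$ is precisely the decomposition the paper cites from Mumford); this makes (i) $\Rightarrow$ (iv) immediate from the fact that the monodromy group of an \'etale Nori finite bundle is a quotient of $\pi^{\text{\'et}}(X,x)$, so you never need the paper's extraction of a rank-$2$ subquotient from a unipotent representation. Your (iv) $\Rightarrow$ (ii) via the regular representation coincides with the paper's. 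The trade-off: your argument is more conceptual and reduces the hardest implication of the paper, (iii) $\Rightarrow$ (iv), to a triviality, at the cost of invoking \'etale cohomology and the Artin--Schreier sequence, machinery the paper's self-contained bundle formalism avoids --- indeed its fixed classes and unipotent rank-$2$ bundles are exactly the bundle-theoretic incarnation of your Artin--Schreier classes. One minor imprecision to fix: $F_X^*-1$ is not $k$-linear on $W_n$ (as $F_X^*$ is only $p$-linear), so ``invertible'' should be read as bijective as an additive map; this holds because $1+F_X^*+(F_X^*)^2+\cdots$ is a finite sum on $W_n$, and injectivity there is all your kernel computation actually needs.
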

\begin{proof}
The equivalent of $(i)$ and $(ii)$ is clear.

Let $F_X^*: H^1(X, \fO_X) \rightarrow H^1(X, \fO_X)$ be the homomorphism induced from the Frobenius morphism $F_X$ of $X$. Following \cite[Page 143, Corollary]{mu}, there is a decomposition of $k$-vector spaces $H^1(X, \fO_X)=H_\text{\'et}\oplus H_{loc}$ such that $F_X^*$ is nilpotent on $H_{loc}$ and $H_\text{\'et}$ has a basis $e_1, \ldots, e_d$ such that $F_X^*(e_i)=e_i, i=1, \ldots, d$. Obviously, $F_X^*$ is not nilpotent if and only if $H_\text{\'et}\not=0$. In that case let $0\not=\alpha\in H_\text{\'et}$ such that $F_X^*(\alpha)=\alpha$. Since $H^1(X, \fO_X)\simeq \Ext_{\fO_X}^1(\fO_X, \fO_X)$, $\alpha$ is a cohomology class of an extension $V$ of $\fO_X$ by itself and $F_X^*V\simeq V$. By Remark \ref{32}(i), $V$ is an \'etale Nori finite vector bundle. Denote the corresponding representation of the monodromy group by $\rho: G(k) \longrightarrow \GL_2(k)$. Note that $V$ is a non-trivial extension of $\fO_X$ by itself, there is a basis of $k^2$ such that 
\[\rho(g)=\left(\begin{matrix} 
1&a_g \\ 
0&1
\end{matrix}\right)\]
for all $g\in G(k)$, where $a_g\in k$ and $a_g\not=0$ for some $g$. This implies that $\Im(\rho)$ is a $p$-group. Since $\Im(\rho)$ is a quotient of $\pi_1(X, x)$, this proves the implication $(ii) \Rightarrow (iii)$.

For $(iii) \Rightarrow (iv)$, assume $V$ is a non-trivial Frobenius periodic vector bundle such that the monodromy group is a $p$-group. Put $r=\mathrm{rank}(V)$ and let $\rho: G(k) \longrightarrow \GL_r(k)$ be the representation corresponding to $V$. By Proposition \ref{prop3}, $\rho$ is realizable over $\overline{\mathbb F}_p$, that is, there is a basis of $k^r$ such that $\rho(g)=(a_{ij}^g)_{r\times r}\in \GL_r(\overline{\mathbb F}_p), \ g\in G(k)$. Note that $G(k)$ is a $p$-group, it is unipotent (see, for example, \cite[Proposition 26]{se}), that is, we can choose a basis such that $\rho(g)$'s are upper-triangle matrices with $1$ on the diagonal. Then one can choose a non-trivial representation $\rho^\prime: G(k) \longrightarrow \GL_2(k)$ which is a subquotient of $\rho$ and which is realizable over $\overline{\mathbb F}_p$. This $\rho^\prime$ defines a vector bundle $V^\prime$ which is also Frobenius periodic by Proposition \ref{prop3}. Obviously $V^\prime$ is an extension of $\fO_X$ by itself and if $\alpha\in H^1(X, \fO_X)$ is its cohomology class then $\alpha\not=0$ and $(F_X^*)^n(\alpha)=\alpha$ for some $n>0$. This shows that $F_X^*$ is not nilpotent on $H^1(X, \fO_X)$ and $(iii) \Rightarrow (iv)$.

Finally, if $H$ is a non-trivial $p$-quotient of $\pi_1(X, x)$ then it has a non-trivial representation over $\mathbb F_p$, for example, the regular representation. This representation defines a vector bundle $V$ which is Frobenius periodic by Proposition \ref{prop3}. This contradicts to the condition in $(iv)$. Then $(iv)$ implies $(ii)$.
\end{proof}

%%%%%%%%%%%%%%%%%%%%%%%%%%%%%%%%%%%%%%%%%%%%%%%%

\end{document}